\documentclass[11pt]{amsart}
\usepackage[margin=1.05in]{geometry}
\usepackage{amsmath,amssymb,amsthm,mathtools}
\usepackage{microtype}
\usepackage[hidelinks]{hyperref}
\usepackage{enumitem}
\usepackage{fancyhdr}

\newtheorem{theorem}{Theorem}[section]
\newtheorem{lemma}[theorem]{Lemma}
\newtheorem{proposition}[theorem]{Proposition}

\theoremstyle{remark}
\newtheorem{remark}[theorem]{Remark}

\newcommand{\R}{\mathbb R}
\newcommand{\Sph}{\mathbb S}
\newcommand{\Hd}{\dim_{\mathrm H}}
\newcommand{\secg}{\operatorname{sec}_g}

\title{A Smooth Nonnegatively Curved Sphere Whose\\Zero-Curvature Point Locus Has Hausdorff Dimension \texorpdfstring{$\boldsymbol{1/2}$}{1/2}}
\author{Yuhang Liu\\Xi'an Jiaotong-Liverpool university, Department of Pure Mathematics, \\111 Ren'ai Road, Suzhou Industrial Park, Suzhou, Jiangsu Province, China\\yuhang.liu02@xjtlu.edu.cn\\
+86-0512-89167161}
\date{2026-8-21}
\keywords{Sectional Curvature, Hausdorff Dimension}
\subjclass[2010]{53C21}

\begin{document}
\maketitle
\vspace{-1.2em}

\begin{abstract}
For every integer $m\geq 3$, we construct a smooth Riemannian metric $g$ on a manifold diffeomorphic to $\Sph^m$ such that $\secg\geq0$, every sectional curvature is strictly positive away from a closed nowhere-dense set, and the point locus on which some sectional curvature vanishes has Hausdorff dimension exactly $1/2$. The metric is induced on the boundary of a smooth convex body in $\R^{m+1}$.  The essential local model is a convex graph whose Hessian has a one-dimensional kernel precisely on a Cantor subset of one coordinate axis; a regularized maximum then inserts this graph into a round sphere without introducing additional degenerate points. The main content of this paper is generated by ChatGPT 5.6 and verified by the author.
\end{abstract}

\section{Statement and convention}
Part of the motivation of this construction comes from recent advances in the study of positively curved manifolds, e.g. \cite{Kennard, AnushaWiemeler, Liu}.

\textbf{Acknowledgement:} the author thanks Xi'an Jiaotong-Liverpool University for providing financial and academic support. He also thanks Zipei Nie for providing access to ChatGPT and insights about AI usage. His gratitude also goes to his daughter and son who were born last year, and all of his family members who have always been supportive during his academic career.\\

For a Riemannian manifold $(M,g)$ with nonnegative sectional curvature, define the \emph{zero-curvature point locus}
\[
 \mathcal Z(g)
 :=\left\{p\in M:\text{there is a two-plane }\Pi\subset T_pM
 \text{ with }\secg(\Pi)=0\right\}.
\]
This is a subset of the manifold, rather than a subset of the Grassmann bundle of two-planes.\\

The main conclusion of this paper is the following theorem:
\begin{theorem}\label{thm:main}
For every integer $m\geq3$, there is a smooth closed hypersurface
$\Sigma^m\subset\R^{m+1}$, diffeomorphic to $\Sph^m$, such that its induced metric $g$ satisfies
\begin{enumerate}[label=\textup{(\arabic*)},leftmargin=2.2em]
 \item $\secg\geq0$ everywhere;
 \item every two-plane has strictly positive sectional curvature at every point of the open dense set $\Sigma\setminus\mathcal Z(g)$;
 \item $\Hd\mathcal Z(g)=\tfrac12$.
\end{enumerate}
In fact, $\mathcal Z(g)$ is bi-Lipschitz equivalent to a self-similar Cantor set of Hausdorff dimension $1/2$.
\end{theorem}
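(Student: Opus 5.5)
The plan is to realize $\Sigma$ as the boundary of a smooth convex body, so that the Gauss equation does all the curvature bookkeeping. For a hypersurface in $\R^{m+1}$ with second fundamental form $h$ (with respect to the inner normal), the sectional curvature of a plane spanned by orthonormal $e_1,e_2$ is $h(e_1,e_1)h(e_2,e_2)-h(e_1,e_2)^2$. When $h\geq0$, this vanishes for some plane exactly when $\operatorname{rank} h\leq m-1$. Indeed, if $h$ has a kernel vector $v$, then every plane containing $v$ is flat. Conversely, if $h>0$ then Cauchy--Schwarz for $h$ gives a strictly positive value on every plane. Thus, for a convex hypersurface, (1) is automatic, and $\mathcal Z(g)$ is precisely the set where $h$ degenerates. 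Locally, where $\Sigma$ is a graph $x_{m+1}=f(x)$ with $f$ convex, this set is $\{\det D^2f=0\}$.

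\textbf{Local model.} Fix a self-similar Cantor set $C\subset[0,1]$ of dimension $1/2$, for instance the middle-half Cantor set with two maps of ratio $1/4$, so that $\dim =\log2/\log4=1/2$. Choose a smooth function $\phi\geq0$ on $\R$ with $\phi^{-1}(0)=C$; this is possible because $C$ is closed. Set $\psi(t)=\int_0^t\int_0^s\phi$, so that $\psi''=\phi$. Then take
\[
f(x_1,\dots,x_m)=\psi(x_1)+|x|^2\bigl(\text{small}\bigr)\cdots
\]
More carefully, we need $D^2f$ to have a one-dimensional kernel exactly on $C\times\{0\}$. Take $f(x)=\psi(x_1)+\tfrac12\sum_{i\ge2}x_i^2+\chi(x)$, where $\chi$ is a smooth convex correction with $D^2\chi=0$ on the axis $\{x_2=\dots=x_m=0\}$ and $\partial_{11}^2\chi>0$ off the axis, for example $\chi=x_1^2\rho(x_2^2+\dots+x_m^2)$-type terms. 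The main obstacle is the convexity and nondegeneracy check away from the axis: the cross terms must be dominated. I would try $f=\psi(x_1)+\tfrac12|x'|^2(1+x_1^2)$ with $x'=(x_2,\dots,x_m)$, restricted to a small box. Its Hessian has $(1,1)$ entry $\phi(x_1)+|x'|^2$, off-diagonal entries $2x_1x'$, and lower block $(1+x_1^2)I$. The determinant is then $(1+x_1^2)^{m-2}\bigl[(\phi+|x'|^2)(1+x_1^2)-4x_1^2|x'|^2\bigr]$, which is positive off $C\times\{0\}$ provided $|x_1|<1/\sqrt3$ (rescale $C$ into that interval). On the axis, the kernel is $\partial_1$ exactly at points of $C$. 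This gives the Cantor locus, which is isometric via the graph map (bi-Lipschitz) to $C$.

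\textbf{Gluing.} Take the round sphere as the boundary of a ball, and place the graph of $f$ over a small box, tilted so that it lies locally below the sphere near a cap; we then take the convex body as the intersection of a ball with the epigraph region. The corner is smoothed by a regularized maximum $\max_\varepsilon(F_1,F_2)$ of the convex defining functions, with $F_1=|y|^2-R^2$ and $F_2=f(x)-x_{m+1}$ suitably normalized. The regularized maximum of convex functions is convex, and its Hessian is a convex combination of the two Hessians plus a positive semidefinite term $\mu''(\cdot)\,d(F_1-F_2)^{\otimes2}$. In the transition region the round term contributes a positive definite part, so no new degeneracy appears. The region where only $F_2$ is active must contain $C\times\{0\}$ and stay away from the transition zone, while the sphere part is strictly convex. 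The level set is a smooth convex hypersurface, hence diffeomorphic to $\Sph^m$. The zero locus is the image of $C$, a nowhere-dense closed set of dimension $1/2$, which gives (2) and (3). Since $m\ge3$ is only needed for the codimension, the same argument works for any $m\geq2$ as well.
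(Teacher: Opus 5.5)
Your proposal is correct. Your local model is the paper's: $f=\psi(x_1)+\tfrac12|x'|^2(1+x_1^2)$ is $U_0=u(t)+\tfrac{\varepsilon}{2}(4+t^2)|y|^2$ up to constants, and the same Schur-complement computation shows that the mixed term $x_1^2|x'|^2$ restores strict convexity off the axis.

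The genuine difference is the gluing. The paper stays with height functions over $B^m$. It takes the regularized maximum of $U=U_0-\tfrac12$ and the lower hemisphere $f_-$, and checks with explicit constants that $F=f_-$ near $|x|=1$, that $F<f_+$, and that $U-f_->\delta$ near $K$. It then caps with the upper hemisphere and reads all curvatures off the single graph formula.

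You instead smooth the intersection of a ball and an epigraph, taking the regularized maximum $G$ of the defining functions $F_1,F_2$ in $\R^{m+1}$. This gives you the closed hypersurface for free: a convex $G$ that is negative somewhere has $\nabla G\neq0$ on $\{G=0\}$, and that set bounds a compact convex body. No cap or disjointness check is needed. The price is that you must work with the level-set second fundamental form $D^2G|_{T\Sigma}/|\nabla G|$. Since $D^2F_2=D^2f\oplus0$ is degenerate in the $x_{m+1}$-direction, positivity in the transition zone comes only from $\lambda D^2F_1=2\lambda I$ with $0<\lambda<1$, as you correctly note.

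What you leave to ``suitably normalized'' is routine but should be stated:
\begin{itemize}
\item Your $f$ is convex only on the slab $|x_1|<1/\sqrt3$. (The paper's factor $4+t^2$ is chosen so that its slab $|t|<2/\sqrt3$ contains all of $B^m$.) So the ball must lie over that slab; otherwise, where $\lambda$ is near $0$, a negative eigenvalue of $D^2F_2$ cannot be compensated.
\item No tilting is needed; the graph only has to pass through the interior of the ball.
\item The graph over $C\times\{0\}$ must lie where $F_1<-\delta$, so that $G=F_2$ near those points.
\end{itemize}
For instance, a ball of radius $1/2$ centred at $(x,x_{m+1})=(0,\tfrac14)$, with $C$ shrunk close to $0$ and $\delta$ small, meets all three conditions.

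Finally, the graph map on the axis is bi-Lipschitz, not isometric. The comparison of intrinsic and Euclidean distance then goes exactly as in the paper.
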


We note that in the construction below, degeneracy is confined simultaneously in the transverse variables, so the exceptional point set itself is only a Cantor set.

\section{Three elementary tools}

\subsection{A Cantor set of dimension \texorpdfstring{$1/2$}{1/2}}

Let $C\subset[0,1]$ be the unique nonempty compact set satisfying
\begin{equation}\label{eq:cantor-ifs}
 C=\frac14 C\ \cup\ \left(\frac34+\frac14 C\right).
\end{equation}
Equivalently, at each stage one retains the left and right subintervals of relative length $1/4$.  Put
\begin{equation}\label{eq:E-definition}
 E:=\frac18\left(C-\frac12\right)\subset\left[-\frac1{16},\frac1{16}\right].
\end{equation}
Then $E$ is compact, nowhere dense, and symmetric under $t\mapsto-t$.

\begin{lemma}\label{lem:cantor-dimension}
The Hausdorff dimension of $E$ is $1/2$.
\end{lemma}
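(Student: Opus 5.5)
Since $E$ is the image of $C$ under the similarity $t\mapsto \frac18(t-\frac12)$, and Hausdorff dimension is invariant under similarities (indeed under bi-Lipschitz maps), it suffices to show $\Hd C=\tfrac12$. The set $C$ is the attractor of the iterated function system $\{f_1(t)=t/4,\ f_2(t)=3/4+t/4\}$ of two contracting similarities with ratio $1/4$. The Moran equation $2\cdot(1/4)^s=1$ gives $s=\log2/\log4=\tfrac12$. I will therefore compute $\Hd C$ directly by proving matching upper and lower bounds, rather than quoting the Moran--Hutchinson theorem. Quoting that theorem, which applies since the open set condition holds with $U=(0,1)$ because $f_1(U)=(0,\tfrac14)$ and $f_2(U)=(\tfrac34,1)$ are disjoint subsets of $U$, would also be acceptable.

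\emph{Upper bound.} Iterating \eqref{eq:cantor-ifs} $n$ times shows that $C$ is covered by the $2^n$ stage-$n$ intervals $f_{i_1}\circ\cdots\circ f_{i_n}([0,1])$, each of length $4^{-n}$. Hence
\[
\mathcal H^{1/2}_{4^{-n}}(C)\le 2^n\,(4^{-n})^{1/2}=1 .
\]
Letting $n\to\infty$ gives $\mathcal H^{1/2}(C)\le1$, so $\Hd C\le\tfrac12$.

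\emph{Lower bound.} This is the main step. I would use the mass distribution principle. Let $\mu$ be the natural probability measure on $C$ that gives mass $2^{-n}$ to each stage-$n$ interval; it exists as the self-similar measure with weights $(\tfrac12,\tfrac12)$, or as the weak limit of normalized Lebesgue measure on the stage-$n$ sets. Take a set $U\subset\R$ with $|U|=\delta<1$ and choose $n$ with $4^{-n-1}\le\delta<4^{-n}$. Distinct stage-$n$ intervals are separated by gaps of length at least $\tfrac12\cdot4^{-n+1}\cdot$ (in fact the gap inside a stage-$(n-1)$ interval is $\tfrac12\cdot 4^{-(n-1)}=2\cdot4^{-n}>\delta$). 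Therefore $U$ meets at most one stage-$n$ interval; to be safe, I would allow at most two. It follows that
\[
\mu(U)\le 2\cdot2^{-n}=2\cdot(4^{-n})^{1/2}\le 2\cdot 2\,\delta^{1/2}=4\,|U|^{1/2}.
\]
The mass distribution principle then yields $\mathcal H^{1/2}(C)\ge \mu(C)/4=1/4>0$, so $\Hd C\ge\tfrac12$.

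The only delicate point is the gap bookkeeping in the lower bound, namely that a set of diameter less than $4^{-n}$ meets boundedly many stage-$n$ intervals. This follows from the fact that inside each stage-$(n-1)$ interval of length $4^{-(n-1)}$, the middle removed part has length $\tfrac12\cdot4^{-(n-1)}$. Combining the two bounds with the similarity invariance noted at the start gives $\Hd E=\tfrac12$. Since $0<\mathcal H^{1/2}(E)<\infty$, this also gives the finiteness and positivity needed later in the paper.
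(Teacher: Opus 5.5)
Your proposal is correct and follows essentially the same route as the paper: for the upper bound you cover $C$ by the $2^n$ stage-$n$ intervals of length $4^{-n}$, and for the lower bound you use the natural measure giving mass $2^{-n}$ to each stage-$n$ interval together with the gap estimate $2\cdot 4^{-n}$ to get $\mu(U)\le 4|U|^{1/2}$. Your version is marginally sharper, since it gives $\mathcal H^{1/2}(C)\le 1$ directly and observes that a set of diameter less than $4^{-n}$ meets only one stage-$n$ interval, but these differences are cosmetic.
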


\begin{proof}
Translation and nonzero scaling do not change Hausdorff dimension, so it is enough to treat $C$.  At level $k$, the set $C$ is covered by $2^k$ intervals of length $4^{-k}$.  Hence, for every $s>1/2$,
\[
 2^k(4^{-k})^s=(2\,4^{-s})^k\longrightarrow0,
\]
which gives $\Hd C\leq1/2$.

For the reverse inequality, let $\mu$ be the natural probability measure assigning mass $2^{-k}$ to every level-$k$ interval.  Distinct level-$k$ intervals have length $4^{-k}$ and are separated by gaps of length at least $2\cdot4^{-k}$.  Therefore an interval $I$ of length at most $4^{-k}$ meets at most two level-$k$ intervals.  If
\[
 4^{-(k+1)}<|I|\leq4^{-k},
\]
then
\[
 \mu(I)\leq2\cdot2^{-k}
 =2(4^{-k})^{1/2}
 <4|I|^{1/2}.
\]
Thus, for every countable interval cover $C\subset\bigcup_j I_j$ by sufficiently short intervals,
\[
 1=\mu(C)\leq\sum_j\mu(I_j)
 \leq4\sum_j |I_j|^{1/2}.
\]
It follows that the $1/2$-dimensional Hausdorff measure of $C$ is positive, and hence $\Hd C\geq1/2$.
\end{proof}

\subsection{A smooth function with a prescribed zero set}

\begin{lemma}\label{lem:smooth-zero-set}
If $A\subset\R$ is closed, then there is a function $q\in C^\infty(\R)$ such that $q\geq0$ and $q^{-1}(0)=A$.  If $A=-A$, the function may be chosen even and bounded by $1$.
\end{lemma}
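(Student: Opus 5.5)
The plan is the classical Whitney construction: decompose the open complement into intervals, put one nonnegative bump on each, and sum with weights small enough that the series converges in every $C^k$.

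\textbf{Trivial cases.} If $A=\R$, take $q\equiv0$. If $A=\emptyset$, take $q\equiv1$.

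\textbf{Bumps.} Otherwise write the open set $U=\R\setminus A$ as a countable disjoint union of open intervals $I_j$, some of which may be unbounded. Fix the standard function $\psi(t)=e^{-1/t}$ for $t>0$ and $\psi(t)=0$ for $t\le 0$. For a bounded $I_j=(a_j,b_j)$, set
\[
 \phi_j(x)=\psi(x-a_j)\,\psi(b_j-x).
\]
For a half-line $(a_j,\infty)$, use $\phi_j(x)=\psi(x-a_j)$, and treat $(-\infty,b_j)$ symmetrically. Each $\phi_j$ is smooth on $\R$, is strictly positive exactly on $I_j$, and has every derivative bounded on $\R$. Put $M_{j}=\max_{k\le j}\sup_\R|\phi_j^{(k)}|$, which is finite, and define
\[
 q=\sum_j 2^{-j}\,\frac{\phi_j}{1+M_j}.
\]

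\textbf{Convergence and zero set.} For each fixed $k$, the $k$-th derivatives of the terms with $j\ge k$ are bounded by $2^{-j}$. The $k$-fold differentiated series therefore converges uniformly, so $q\in C^\infty$. Since $0\le\phi_j/(1+M_j)\le1$, we get $0\le q\le\sum_j 2^{-j}\le 1$. The intervals are disjoint, so at each point $x$ at most one term is nonzero. Hence $q(x)>0$ exactly when $x\in\bigcup_j I_j=U$, which gives $q^{-1}(0)=A$.

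\textbf{Symmetric case.} If $A=-A$, replace $q$ by $\tfrac12\bigl(q(x)+q(-x)\bigr)$. This function is even, smooth, satisfies $0\le\cdot\le1$, and vanishes exactly where both $q(x)$ and $q(-x)$ vanish, that is, on $A\cap(-A)=A$.

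The only point needing care is the choice of weights that makes all derivatives converge at once. The diagonal normalization by $M_j$ handles this, so I expect no real obstacle.
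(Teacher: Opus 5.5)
Your proof is correct and follows the same scheme as the paper: a sum of smooth nonnegative bumps whose positivity sets exhaust $\R\setminus A$, with diagonal weights $2^{-j}/(1+\max_{k\le j}\|\phi_j^{(k)}\|_\infty)$ giving convergence in every $C^k$. The differences are minor: you use one explicit bump per component interval, and you get $q\le1$ from the weights and then average. That forces you to check bounded derivatives on half-lines and to treat $A=\emptyset$ separately, whereas the paper's compactly supported bumps in $\R\setminus A$ avoid both and work unchanged in higher dimensions; the paper also symmetrizes by summing $s=q_0(t)+q_0(-t)$ and bounds via $s/(1+s)$ rather than by averaging.
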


\begin{proof}
Let $O=\R\setminus A$.  Choose nonnegative functions
$\phi_j\in C_c^\infty(O)$ such that the open sets $\{\phi_j>0\}$ cover $O$.  Choose $a_j>0$ so rapidly decreasing that
\[
 a_j\left(1+\max_{0\leq r\leq j}\|\phi_j^{(r)}\|_\infty\right)\leq2^{-j}.
\]
For every fixed derivative order, the corresponding differentiated series converges uniformly.  Hence
\[
 q_0:=\sum_{j=1}^\infty a_j\phi_j
\]
is smooth, nonnegative, and has zero set exactly $A$.  If $A=-A$, replace $q_0$ by $q_0(t)+q_0(-t)$ and then by
\[
 q(t):=\frac{q_0(t)+q_0(-t)}{1+q_0(t)+q_0(-t)}.
\]
The resulting function is smooth, even, takes values in $[0,1)$, and has zero set $A$.
\end{proof}

Apply Lemma~\ref{lem:smooth-zero-set} to the symmetric set $E$ in \eqref{eq:E-definition}.  Fix from now on an even function
\begin{equation}\label{eq:q-properties}
 q\in C^\infty(\R),\qquad 0\leq q\leq1,
 \qquad q^{-1}(0)=E.
\end{equation}

\subsection{A regularized maximum}

\begin{lemma}\label{lem:regularized-max}
For every $\delta>0$, there is a smooth function
$M_\delta:\R^2\to\R$ with the following properties:
\begin{enumerate}[label=\textup{(\alph*)},leftmargin=2.2em]
 \item $\max\{a,b\}\leq M_\delta(a,b)\leq\max\{a,b\}+\delta/2$;
 \item $M_\delta(a,b)=\max\{a,b\}$ whenever $|a-b|\geq\delta$;
 \item if $A$ and $B$ are smooth convex functions on a convex domain, then $M_\delta(A,B)$ is convex;
 \item at a point where $|A-B|<\delta$, if either $D^2A$ or $D^2B$ is positive definite, then $D^2M_\delta(A,B)$ is positive definite.
\end{enumerate}
\end{lemma}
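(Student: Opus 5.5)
The plan is the standard construction $M_\delta(a,b):=\frac{a+b}{2}+\Phi\bigl(\frac{a-b}{2}\bigr)$. Here $\Phi:\R\to\R$ is a smooth convex even function with $\Phi(s)=|s|$ for $|s|\geq\delta/2$, $\Phi(s)\geq|s|$ everywhere, $\Phi(0)\leq\delta/2$, and $\Phi''\geq0$. To build $\Phi$, I take a smooth even mollifier $\rho\geq0$ supported in $[-\delta/2,\delta/2]$ with $\int\rho=1$ and set $\Phi:=|\cdot|*\rho$.

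\textbf{Properties of $\Phi$.} Convexity of $\Phi$ follows from convexity of $|\cdot|$. For $|s|\geq\delta/2$ the function $|\cdot|$ is affine on the support of the convolution, so $\Phi(s)=|s|$ there; here evenness of $\rho$ kills the linear term. Jensen gives $\Phi\geq|\cdot|$. The bound $\Phi(s)\leq|s|+\delta/2$ holds because $|s-u|\leq|s|+|u|$. Since $\max\{a,b\}=\frac{a+b}2+\bigl|\frac{a-b}2\bigr|$, properties (a) and (b) follow at once. In (b), $|a-b|\geq\delta$ means $|s|\geq\delta/2$ with $s=\frac{a-b}{2}$. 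Moreover $|\Phi'|\leq1$, being an average of $\operatorname{sign}$.

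\textbf{Convexity, property (c).} Write $F=M_\delta(A,B)$ with $u=\frac{A-B}{2}$. Differentiating twice gives
\[
D^2F=\tfrac12(D^2A+D^2B)+\Phi'(u)\,\tfrac12(D^2A-D^2B)+\Phi''(u)\,Du\otimes Du .
\]
Set $\lambda=\frac{1+\Phi'(u)}2\in[0,1]$. Then
\[
D^2F=\lambda D^2A+(1-\lambda)D^2B+\Phi''(u)\,Du\otimes Du .
\]
Each term is positive semidefinite, which gives (c) for smooth convex $A,B$.

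\textbf{Positive definiteness, property (d).} For (d) we need $\lambda\in(0,1)$ strictly whenever $|u|<\delta/2$, i.e.\ $|\Phi'(s)|<1$ for $|s|<\delta/2$. This is the only delicate point and requires choosing $\rho$ with $\rho>0$ on the open interval $(-\delta/2,\delta/2)$. With that choice,
\[
\Phi'(s)=\int\operatorname{sign}(s-t)\rho(t)\,dt=2\int_{-\infty}^{s}\rho-1 ,
\]
which lies strictly in $(-1,1)$ when $s\in(-\delta/2,\delta/2)$. Then if, say, $D^2A>0$, the term $\lambda D^2A$ is positive definite and the remaining terms are semidefinite, so $D^2F>0$; the case $D^2B>0$ is symmetric.

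I expect no real obstacle. The only things to get right are the strict positivity of $\rho$ on its open support, which is easily arranged (e.g.\ $\rho(t)=c\exp(-1/(\delta^2/4-t^2))$), and the bookkeeping of $\Phi'$ in the Hessian formula.
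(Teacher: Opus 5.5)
Your proposal is correct and is essentially the paper's proof: your $\Phi$ is the paper's mollified absolute value rescaled, $\Phi(s)=\tfrac12\vartheta_\delta(2s)$ with mollifier $\rho_{\mathrm{yours}}(t)=2\rho_{\mathrm{paper}}(2t)$. With this identification your $M_\delta$ coincides with the paper's, and your decomposition $D^2F=\lambda D^2A+(1-\lambda)D^2B+\Phi''(u)\,Du\otimes Du$ is the paper's formula. Your explicit formula $\Phi'(s)=2\int_{-\infty}^{s}\rho-1$ also justifies the strict bound $|\Phi'|<1$ on the open support, which the paper merely asserts.
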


\begin{proof}
Choose an even nonnegative mollifier $\rho\in C_c^\infty(\R)$ supported in $[-\delta,\delta]$, positive for $|r|<\delta$, and satisfying $\int_\R\rho=1$.  Define
\[
 \vartheta_\delta(s):=(|\,\cdot\,|*\rho)(s)
 =\int_\R|s-r|\rho(r)\,dr.
\]
Then $\vartheta_\delta$ is smooth, even, and convex.  Since $\rho$ is even, it has mean zero, and therefore
$\vartheta_\delta(s)=|s|$ for $|s|\geq\delta$.  Jensen's inequality and the triangle inequality give
\[
 |s|\leq\vartheta_\delta(s)\leq|s|+\delta.
\]
Moreover, $|\vartheta_\delta'(s)|<1$ for $|s|<\delta$, and
$\vartheta_\delta''=2\rho\geq0$.

Define
\[
 M_\delta(a,b):=\frac{a+b+\vartheta_\delta(a-b)}2.
\]
The first two assertions follow immediately.  For $F=M_\delta(A,B)$, put
$d=A-B$ and
\[
 \lambda:=\frac{1+\vartheta_\delta'(d)}2\in[0,1].
\]
A direct differentiation gives
\begin{equation}\label{eq:hessian-regmax}
 D^2F
 =\lambda D^2A+(1-\lambda)D^2B
 +\frac12\vartheta_\delta''(d)\,d(A-B)\otimes d(A-B).
\end{equation}
All three terms are positive semidefinite when $A$ and $B$ are convex.  If $|d|<\delta$, then $0<\lambda<1$, so a positive-definite Hessian among $D^2A,D^2B$ makes the right-hand side positive definite.
\end{proof}

\section{A convex local model with Cantor degeneracy}

Write a point of $\R^m$ as
\[
 x=(t,y),\qquad t\in\R,\quad y\in\R^{m-1}.
\]
Let $B^m=\{x\in\R^m:|x|<1\}$ and fix
\[
 \varepsilon:=\frac1{100}.
\]
Define
\begin{equation}\label{eq:u-definition}
 u(t):=\varepsilon\int_0^t(t-s)q(s)\,ds.
\end{equation}
Since $q$ is even, the function $u$ is even.  Moreover,
\begin{equation}\label{eq:u-properties}
 u''(t)=\varepsilon q(t),\qquad u(0)=u'(0)=0,
 \qquad 0\leq u(t)\leq\frac\varepsilon2t^2
 \quad (|t|\leq1).
\end{equation}
Set
\begin{equation}\label{eq:U0-definition}
 U_0(t,y):=u(t)+\frac\varepsilon2(4+t^2)|y|^2,
 \qquad
 U(t,y):=U_0(t,y)-\frac12.
\end{equation}
Let
\begin{equation}\label{eq:K-definition}
 K:=E\times\{0\}\subset B^m.
\end{equation}

\begin{proposition}\label{prop:local-model}
The function $U$ is convex on $B^m$.  Its Hessian is positive definite on $B^m\setminus K$, and at every point of $K$ its Hessian has rank exactly $m-1$.  Moreover,
\begin{equation}\label{eq:U0-bound}
 0\leq U_0(x)\leq3\varepsilon=\frac3{100}
 \qquad (x\in B^m).
\end{equation}
\end{proposition}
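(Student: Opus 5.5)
Since $U$ and $U_0$ differ by a constant, the plan is to compute the Hessian of $U_0$ explicitly in the splitting $x=(t,y)$, analyze it as a block matrix, and then bound $U_0$ directly on $B^m$.

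\textbf{Hessian.} Differentiating \eqref{eq:U0-definition} and using $u''=\varepsilon q$ from \eqref{eq:u-properties} gives
\[
 D^2U_0=\varepsilon\begin{pmatrix} q(t)+|y|^2 & 2t\,y^{T}\\ 2t\,y & (4+t^2)I_{m-1}\end{pmatrix}.
\]
The lower-right block is $\varepsilon(4+t^2)I\succ0$. Hence positive semidefiniteness, respectively definiteness, reduces to the sign of the Schur complement
\[
 S=q(t)+|y|^2-\frac{4t^2|y|^2}{4+t^2}=q(t)+\frac{4-3t^2}{4+t^2}\,|y|^2 .
\]
On $B^m$ we have $|t|<1$, so $4-3t^2>1>0$. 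Together with $q\ge0$ this gives $S\ge0$. Therefore $D^2U\succeq0$ on the convex set $B^m$, and $U$ is convex.

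\textbf{Degeneracy locus.} $S=0$ holds exactly when $q(t)=0$ and $y=0$, that is, when $t\in E$ by \eqref{eq:q-properties} and $y=0$. So the Hessian is positive definite precisely off $K$. At a point of $K$, the Hessian is $\varepsilon\,\mathrm{diag}(0,4+t^2,\dots,4+t^2)$, which has rank exactly $m-1$ with kernel $\partial_t$. One also needs $K\subset B^m$, which holds because $E\subset[-1/16,1/16]$.

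\textbf{Bound.} Every term of $U_0$ is nonnegative, since $u\ge0$ by \eqref{eq:u-properties}. On $B^m$ we have $t^2+|y|^2<1$, so
\[
 U_0\le \frac\varepsilon2 t^2+\frac{5\varepsilon}2|y|^2\le\frac{5\varepsilon}{2}<3\varepsilon .
\]

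The only step that needs care is the Schur-complement sign. The cross term $2t\,y$ could in principle destroy convexity, but the factor $4$ in the coefficient $(4+t^2)$ makes the quadratic $4-3t^2$ stay positive on the unit ball. I would double-check this computation first. Everything else follows directly from the earlier properties of $q$ and $u$.
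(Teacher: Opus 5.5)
Your proposal is correct and follows the paper's proof essentially step for step. It uses the same block Hessian, the same Schur complement $q(t)+\frac{4-3t^2}{4+t^2}|y|^2$ (positive because $|t|<1$), and the same diagonal form on $K$, which gives rank $m-1$. Your upper bound is slightly sharper: you use $t^2+|y|^2<1$ to get $\frac{5\varepsilon}{2}$, whereas the paper bounds the two terms separately by $\frac{\varepsilon}{2}$ and $\frac{5\varepsilon}{2}$. Either way the stated bound $U_0\leq 3\varepsilon$ follows.
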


\begin{proof}
Since the constant $-1/2$ does not affect the Hessian, it is enough to consider $U_0$.  In the splitting $\R\oplus\R^{m-1}$,
\begin{equation}\label{eq:hessian-U}
 D^2U_0(t,y)
 =\varepsilon
 \begin{pmatrix}
 q(t)+|y|^2 & 2t y^{T}\\
 2t y & (4+t^2)I_{m-1}
 \end{pmatrix}.
\end{equation}
The lower-right block is positive definite.  Its Schur complement is
\begin{align}
 \varepsilon\left(q(t)+|y|^2
 -\frac{4t^2}{4+t^2}|y|^2\right)
 &=\varepsilon\left(
 q(t)+\frac{4-3t^2}{4+t^2}|y|^2\right).\label{eq:schur}
\end{align}
Because $|t|<1$, the coefficient $(4-3t^2)/(4+t^2)$ is positive.  Thus the Schur complement is nonnegative, and it vanishes exactly when $q(t)=0$ and $y=0$, namely at the points of $K$.  This proves convexity and positive definiteness off $K$.  At a point $(t,0)\in K$, the matrix in \eqref{eq:hessian-U} is
\[
 \varepsilon
 \begin{pmatrix}
 0&0\\0&(4+t^2)I_{m-1}
 \end{pmatrix},
\]
which has rank $m-1$.

Finally, \eqref{eq:u-properties} and $|t|,|y|<1$ give
\[
 0\leq U_0(t,y)
 \leq\frac\varepsilon2+\frac{5\varepsilon}{2}
 =3\varepsilon.
\]
\end{proof}

The mixed term in \eqref{eq:U0-definition} is the localization mechanism.  The $t$-direction Hessian may vanish when $t\in E$, but for $y\neq0$ the positive term $\varepsilon|y|^2$ in $U_{tt}$ restores strict convexity.  Hence the degeneracy does not extend to an $(m-1)$-dimensional fiber.

\section{Insertion into a round sphere}

On $B^m$, write the lower and upper unit hemispheres as graphs
\begin{equation}\label{eq:hemisphere-functions}
 f_-(x):=-\sqrt{1-|x|^2},
 \qquad
 f_+(x):=\sqrt{1-|x|^2}.
\end{equation}
The function $f_-$ is strictly convex, since
\begin{equation}\label{eq:hessian-lower-sphere}
 D^2f_-(x)
 =\frac1{\sqrt{1-|x|^2}}I_m
 +\frac{x\otimes x}{(1-|x|^2)^{3/2}}>0.
\end{equation}
Fix
\[
 \delta:=\frac1{10}
\]
and define
\begin{equation}\label{eq:F-definition}
 F(x):=M_\delta\bigl(U(x),f_-(x)\bigr).
\end{equation}

\begin{proposition}\label{prop:global-graph}
The function $F$ has the following properties.
\begin{enumerate}[label=\textup{(\arabic*)},leftmargin=2.2em]
 \item $F$ is smooth and convex on $B^m$.
 \item $D^2F$ is singular exactly on $K$; there it has rank $m-1$, and it is positive definite on $B^m\setminus K$.
 \item $F=f_-$ whenever $|x|\geq24/25$.
 \item $F(x)<f_+(x)$ for every $x\in B^m$.
\end{enumerate}
\end{proposition}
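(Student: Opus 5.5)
The plan is to apply Lemma~\ref{lem:regularized-max} with $A=U$ and $B=f_-$ on the convex domain $B^m$, using Proposition~\ref{prop:local-model} for $U$ and \eqref{eq:hessian-lower-sphere} for $f_-$, and to reduce every item to elementary comparisons of $U$, $f_-$ and $f_+$ in three radial regions. Smoothness and convexity in (1) follow at once from Lemma~\ref{lem:regularized-max}(c): $M_\delta$ is smooth, and both arguments are smooth and convex on $B^m$.

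For (2), I will use the Hessian formula \eqref{eq:hessian-regmax} with $d=U-f_-$. There are three cases. Where $|d|<\delta$, part (d) of Lemma~\ref{lem:regularized-max} applies, since $D^2f_->0$, so $D^2F>0$. Where $d\ge\delta$, we have $F=U$ on an open set, so $D^2F=D^2U$. Where $d\le-\delta$, $F=f_-$, which is strictly convex. Hence $D^2F$ can be singular only at points with $d\ge\delta$, where it equals $D^2U$. By Proposition~\ref{prop:local-model}, such points are exactly the points of $K$ in that region, with rank $m-1$. It therefore remains to check that $d\ge\delta$ on all of $K$. On $K$ we have $y=0$ and $|t|\le1/16$, so $U=u(t)-1/2\ge-1/2$, while $f_-=-\sqrt{1-t^2}\le-\sqrt{1-1/256}<-0.99$. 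Thus $d>0.49>\delta$. The degeneracy along $K$ is therefore preserved exactly, and nothing new is created.

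For (3), on $|x|\ge24/25$ we have $f_-\ge-\sqrt{1-(24/25)^2}=-7/25=-0.28$. By \eqref{eq:U0-bound}, $U\le3\varepsilon-1/2=-0.47$, so $f_--U\ge0.19\ge\delta$, and Lemma~\ref{lem:regularized-max}(b) gives $F=f_-$. For (4), Lemma~\ref{lem:regularized-max}(a) gives $F\le\max\{U,f_-\}+\delta/2$. If $|x|\ge24/25$ then $F=f_-<f_+$, since $f_+>0>f_-$ there (including near the boundary, where $1-|x|^2>0$). Otherwise $F\le\max\{-0.47,0\}+0.05$. So I must only show $f_+>0.05$ when $|x|<24/25$, and indeed $f_+>7/25=0.28$.

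I expect no real obstacle. The only delicate points are that the open-set arguments in (2) give the Hessian identities without needing continuity of $\lambda$, and that every numerical margin (the constants $0.49$, $0.19$ and $0.28$ against $\delta=0.1$) is checked honestly from \eqref{eq:U0-bound} and $E\subset[-1/16,1/16]$.
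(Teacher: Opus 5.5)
Your proposal is correct and follows essentially the same route as the paper. It applies Lemma~\ref{lem:regularized-max} to $U$ and $f_-$, splits into cases on $d=U-f_-$ using $d>49/100>\delta$ on $K$, and makes the same numerical comparisons: $d\leq-19/100$ on the collar $|x|\geq 24/25$, and $F\leq 1/20<7/25\leq f_+$ for $|x|\leq 24/25$.

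The only slip is that $\{d\geq\delta\}$ is not open. So at points with $d=\pm\delta$ you should read off $D^2F=D^2U$ (resp.\ $D^2f_-$) from \eqref{eq:hessian-regmax}, where $\lambda\in\{0,1\}$ and $\vartheta_\delta''(\pm\delta)=2\rho(\pm\delta)=0$. More simply, observe that $D^2F\geq\lambda D^2U+(1-\lambda)D^2f_->0$ at every point off $K$, since both Hessians are positive definite there.
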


\begin{proof}
Convexity and smoothness follow from Lemma~\ref{lem:regularized-max}, Proposition~\ref{prop:local-model}, and \eqref{eq:hessian-lower-sphere}.  Put
\[
 d(x):=U(x)-f_-(x)
 =U_0(x)-\frac12+\sqrt{1-|x|^2}.
\]
If $x=(t,0)\in K$, then $|t|\leq1/16$ and
\begin{equation}\label{eq:d-on-K}
 d(x)\geq-\frac12+\sqrt{1-\frac1{256}}
 >\frac{49}{100}>\delta.
\end{equation}
Thus $F=U$ on a neighborhood of $K$, so $D^2F$ has rank $m-1$ there.

If $d\geq\delta$ away from $K$, then $F=U$ and $D^2F>0$.  If $d\leq-\delta$, then $F=f_-$ and $D^2F>0$.  In the transition region $|d|<\delta$, formula \eqref{eq:hessian-regmax} contains a positive multiple of the positive-definite matrix $D^2f_-$, so $D^2F>0$.  At the boundary value $d=\delta$, the equality $F=U$ holds; such a point is not in $K$ by \eqref{eq:d-on-K}, and hence $D^2U>0$.  The case $d=-\delta$ reduces to $f_-$.  This proves (2).

If $|x|\geq24/25$, then
\[
 \sqrt{1-|x|^2}\leq\sqrt{1-\frac{576}{625}}
 =\frac7{25}.
\]
Using \eqref{eq:U0-bound},
\[
 d(x)\leq\frac3{100}-\frac12+\frac7{25}
 =-\frac{19}{100}< -\delta.
\]
Therefore $F=f_-$ on this outer collar, proving (3).

It remains to compare $F$ with $f_+$.  If $|x|\leq24/25$, then
\[
 f_+(x)\geq\frac7{25}.
\]
Both $U(x)\leq-47/100$ and $f_-(x)\leq0$, so Lemma~\ref{lem:regularized-max} gives
\[
 F(x)\leq\max\{U(x),f_-(x)\}+\frac\delta2
 \leq\frac1{20}<\frac7{25}\leq f_+(x).
\]
If $|x|\geq24/25$, then $F=f_-<f_+$.  This proves (4).
\end{proof}

Define a closed hypersurface in $\R^m\times\R$ by
\begin{equation}\label{eq:Sigma-definition}
 \Sigma
 :=\{(x,F(x)):x\in B^m\}
 \cup\{(x,f_+(x)):x\in B^m\}
 \cup\{(x,0):|x|=1\}.
\end{equation}
Because $F=f_-$ on an outer collar, the first graph agrees there with the lower unit hemisphere.  Hence the union in \eqref{eq:Sigma-definition} is a smooth embedded hypersurface and is diffeomorphic to $\Sph^m$.  Proposition~\ref{prop:global-graph}(4) shows that the two graph pieces are disjoint away from the equator.

The hypersurface is also the boundary of the convex body
\begin{equation}\label{eq:convex-body}
 \Omega:=\{(x,z):x\in\overline{B^m},\ F(x)\leq z\leq f_+(x)\}.
\end{equation}
Indeed, $F$ is convex and $f_+$ is concave, so the set in \eqref{eq:convex-body} is convex.  This global convexity is consistent with the local curvature calculation below, but the calculation itself will give the precise zero-curvature locus.

\section{Sectional curvature of the hypersurface}

Let $g$ be the metric induced on $\Sigma$ from the Euclidean metric.  We first record the graph calculation.  For a smooth function $G$ on a domain in $\R^m$, parameterize its graph by
\[
 X(x)=(x,G(x)).
\]
For tangent vectors $v,w\in\R^m$, the first and second fundamental forms, using the upward unit normal, are
\begin{equation}\label{eq:fundamental-forms}
 I(v,w)=\langle v,w\rangle+dG(v)dG(w),
 \qquad
 II(v,w)=\frac{D^2G(v,w)}{\sqrt{1+|\nabla G|^2}}.
\end{equation}
The Euclidean Gauss equation therefore gives
\begin{equation}\label{eq:graph-sectional-curvature}
 \sec_g\bigl(dX(\operatorname{span}\{v,w\})\bigr)
 =\frac{
 D^2G(v,v)D^2G(w,w)-D^2G(v,w)^2
 }{
 (1+|\nabla G|^2)
 \bigl(I(v,v)I(w,w)-I(v,w)^2\bigr)
 }
\end{equation}
for linearly independent $v,w$.

If $D^2G$ is positive semidefinite, the numerator in \eqref{eq:graph-sectional-curvature} is nonnegative by the Cauchy--Schwarz inequality for a positive-semidefinite bilinear form.  If $D^2G$ is positive definite, its restriction to every two-dimensional subspace is positive definite, so the numerator is strictly positive.

On the upper hemisphere, the induced metric is the standard round metric and has sectional curvature $1$.  On the lower graph, take $G=F$.  Proposition~\ref{prop:global-graph} shows that $D^2F\geq0$, and hence every sectional curvature is nonnegative.  At every point outside the graph of $K$, the Hessian is positive definite, so every sectional curvature is strictly positive.

At a point $(t,0)\in K$, the function $F$ agrees with $U$, and \eqref{eq:hessian-U} shows that
\[
 D^2F(\partial_t,\partial_t)=0,
 \qquad
 D^2F(\partial_t,v)=0
 \quad\text{for every }v\in\R^m.
\]
Taking $v=\partial_t$ and any nonzero $w$ in a $y$-direction in \eqref{eq:graph-sectional-curvature} gives a zero-curvature two-plane.  Consequently,
\begin{equation}\label{eq:zero-locus-exact}
 \mathcal Z(g)
 =\left\{\bigl((t,0),F(t,0)\bigr):t\in E\right\}.
\end{equation}
This equality also proves that no additional zero-curvature points are introduced by the global patching.

\section{Hausdorff dimension and density}

Let
\[
 \Phi:E\longrightarrow\mathcal Z(g),
 \qquad
 \Phi(t)=\bigl((t,0),F(t,0)\bigr).
\]
For every $|t|\leq1/16$ and $y=0$, the estimate used in \eqref{eq:d-on-K} gives $U-f_->\delta$, not merely at points of $E$.  Hence
\[
 F(t,0)=U(t,0)=u(t)-\frac12
 \qquad\left(|t|\leq\frac1{16}\right).
\]
The Euclidean distance between $\Phi(s)$ and $\Phi(t)$ is at least $|s-t|$.  Conversely, the curve
\[
 r\longmapsto\bigl((r,0),u(r)-\tfrac12\bigr),
 \qquad r\in[s,t],
\]
lies in $\Sigma$ and has length at most
\[
 \left(1+\sup_{|r|\leq1/16}|u'(r)|^2\right)^{1/2}|s-t|.
\]
Since intrinsic distance is at least Euclidean distance and at most the length of this curve, $\Phi$ is bi-Lipschitz from $E$ onto $\mathcal Z(g)$.  Lemma~\ref{lem:cantor-dimension} now yields
\[
 \Hd\mathcal Z(g)=\Hd E=\frac12.
\]

The set $\mathcal Z(g)$ is compact and has Hausdorff dimension smaller than $m$, so it has empty interior.  Its complement is therefore open and dense.  Every sectional curvature is strictly positive at every point of this complement, as proved in the preceding section.  The induced metric is $C^\infty$, and hence in particular is of class $C^2$.  This completes the proof of Theorem~\ref{thm:main}.

\begin{remark}
The same construction gives any prescribed dimension $\alpha\in(0,1)$.  Replace the ratio $1/4$ in \eqref{eq:cantor-ifs} by
$\lambda=2^{-1/\alpha}\in(0,1/2)$.  The resulting two-branch self-similar Cantor set has Hausdorff dimension $\alpha$, while all convexity and curvature arguments are unchanged.
\end{remark}

\end{document}